\newtheorem{theorem}{Theorem}
\newtheorem{corollary}[theorem]{Corollary}
\newtheorem{lemma}[theorem]{Lemma}
\newenvironment{proof}[1][Proof]{\noindent\textbf{#1.} }{\ \rule{0.5em}{0.5em}}
\begin{document}

\centerline{{\Large Reduction of Order, Periodicity and Boundedness in }}
\vspace{0.5ex} \centerline{{\Large Nonlinear, Higher Order Difference Equations }}

\vspace{2ex}

\centerline{H. SEDAGHAT \footnote{\noindent Department of Mathematics, Virginia Commonwealth University, Richmond, Virginia, 23284-2014, USA; Email: hsedagha@vcu.edu}}

\vspace{2ex}

\begin{abstract}
\noindent We consider the semiconjugate factorization and
reduction of order for non-autonomous, nonlinear, higher order difference equations 
containing linear arguments. These equations have appeared in several
mathematical models in biology and economics. By extending some recent results 
to cases where characteristic polynomials of the linear
expressions have complex roots, we obtain new results on boundedness and the
existence of periodic solutions for equations of order 3 or greater.

\end{abstract}

\bigskip

\section{Introduction}

Special cases of the following type of higher order difference equation have
frequently appeared in the literature:%
\begin{equation}
x_{n+1}=\sum_{i=0}^{k}a_{i}x_{n-i}+g_{n}\left(  \sum_{i=0}^{k}b_{i}%
x_{n-i}\right)  ,\quad n=0,1,2,\ldots\label{gla}%
\end{equation}

We assume here that $k$ is a fixed positive integer and for each $n$, the
function $g_{n}:\mathbb{R}\rightarrow\mathbb{R}$ is defined on the real line.
The parameters $a_{i},b_{i}$ are fixed real numbers such that
\[
a_{k}\not =0\text{ or }b_{k}\not =0.
\]

Upon iteration, Equation (\ref{gla}) generates a unique sequence of points
$\{x_{n}\}$ in $\mathbb{R}$ (its solution) from any given set of
$k+1$ initial values $x_{0},x_{-1},\ldots,x_{-k}\in\mathbb{R}$. The
number $k+1$ is the order of (\ref{gla}).

Special cases of Equation (\ref{gla}) appeared in the classical economic
models of the business cycle in twentieth century in the works of Hicks
\cite{Hic}, Puu \cite{Puu}, Samuelson \cite{Sam} and others; see \cite{bk1},
Section 5.1 for some background and references. Other special cases of
(\ref{gla}) occurred later in mathematical studies of biological models
ranging from whale populations to neuron activity; see, e.g., Clark
\cite{Clr}, Fisher and Goh \cite{FG}, Hamaya \cite{Ham} and Section 2.5 in
Kocic and Ladas \cite{KL}.

The dynamics of special cases of (\ref{gla}) have been investigated by several
authors. Hamaya uses Liapunov and semicycle methods in \cite{Ham} to obtain
sufficient conditions for the global attractivity of the origin for the
following special case of (\ref{gla})
\[
x_{n+1}=\alpha x_{n}+a\tanh\left(  x_{n}-\sum_{i=1}^{k}b_{i}x_{n-i}\right)
\]
with $0\leq\alpha<1$, $a>0$ and $b_{i}\geq0$. These results can also be
obtained using only the contraction method in \cite{Mem} and \cite{Sgeo}; also see 
\cite{Liz} for a discussion of alternative methods. The results in
\cite{Sgeo} are used in \cite{bk1}, Section 4.3D, to prove the global
asymptotic stability of the origin for an autonomous special case of
(\ref{gla}) with $a_{i},b_{i}\geq0$ for all $i$ and $g_{n}=g$ for all $n$,
where $g$ is a continuous, non-negative function. The study of global
attractivity and stability of fixed points for other special cases of
(\ref{gla}) appear in \cite{GLV} and \cite{KPS}; also see \cite{KL}, Section 6.9.

The second-order case ($k=1$) has been studied in greater depth. Kent and
Sedaghat obtain sufficient conditions in \cite{KS} for the boundedness and
global asymptotic stability of
\begin{equation}
x_{n+1}=cx_{n}+g(x_{n}-x_{n-1}) \label{sed}%
\end{equation}

In \cite{Elm}, El-Morshedy improves the convergence
results of \cite{KS} for (\ref{sed}) and also gives necessary and sufficient
conditions for the occurrence of oscillations. The boundedness of solutions of
(\ref{sed}) is studied in \cite{S97} and periodic and monotone solutions of
(\ref{sed}) are discussed in \cite{Sed2}. Li and Zhang study the bifurcations
of solutions of (\ref{sed}) in \cite{LZ}; their results include the
Neimark-Sacker bifurcation (discrete analog of Hopf).

A more general form of (\ref{sed}), i.e., the following equation
\begin{equation}
x_{n+1}=ax_{n}+bx_{n-1}+g_{n}(x_{n}-cx_{n-1}) \label{sed1}%
\end{equation}
is studied in \cite{SKy} where sufficient conditions for the occurrence of
periodic solutions, limit cycles and chaotic behavior are obtained using
reduction of order and factorization of the above difference equation into a
pair of equations of lower order. These methods are used in \cite{dkmos} to
determine sufficient conditions on parameters for occurrence of limit cycles
and chaos in those rational difference equations of the following type%
\begin{equation}
x_{n+1}=\frac{ax_{n}^{2}+bx_{n-1}^{2}+cx_{n}x_{n-1}+dx_{n}+ex_{n-1}+f}{\alpha
x_{n}+\beta x_{n-1}+\gamma} \label{ro2}%
\end{equation}
that can be reduced to special cases of (\ref{sed1}).

In this paper, we consider the possible occurrence of complex roots for the
characteristic polynomials associated with the linear expressions $\sum
_{i=0}^{k}a_{i}u_{i}$ and $\sum_{i=0}^{k}b_{i}u_{i}$ in (\ref{gla}). Complex,
non-real roots that are common to both polynomials may occur when (\ref{gla})
has order 3 or greater ($k\geq2$), a situation that cannot occur in the
second-order equations of \cite{dkmos} or \cite{SKy}. We show that the
resulting decomposition of (\ref{gla}) into lower-order equations via complex
conjugate roots nevertheless yields a factor-cofactor pair in the sense of
\cite{arx} or \cite{bk2} with real coefficients and a type-$(k-1,2)$ reduction
of (\ref{gla}) within the real number system. We use this factorization to
study boundedness and the occurrence of periodic solutions and limit cycles
for (\ref{gla}). The results obtained here extend similar results in
\cite{SKy} to equations of order 3 and greater for the first time.

\section{Reduction of order}

We begin with a result from \cite{bk2} (Theorem 5.6) that we quote here as a
lemma. A generalization of this lemma to algebras over fields is proved in
essentially the same way; see \cite{arx2}.

\begin{lemma}
\label{fsor}Let $g_{n}:\mathcal{F}\rightarrow\mathcal{F}$ be a sequence of
functions on a field $\mathcal{F}$. If for $a_{i},b_{i}\in\mathcal{F}$ the
polynomials%
\[
P(u)=u^{k+1}-\sum_{i=0}^{k}a_{i}u^{k-i},\quad Q(u)=\sum_{i=0}^{k}b_{i}u^{k-i}%
\]
have a common, nonzero root $\rho\in\mathcal{F}$ then each solution
$\{x_{n}\}$ of (\ref{gla}) in $\mathcal{F}$ satisfies%
\begin{equation}
x_{n+1}=\rho x_{n}+t_{n+1} \label{cfe}%
\end{equation}
where the sequence $\{t_{n}\}$ is the unique solution of the equation:%
\begin{equation}
t_{n+1}=-\sum_{i=0}^{k-1}p_{i}t_{n-i}+g_{n}\left(  \sum_{i=0}^{k-1}%
q_{i}t_{n-i}\right)  \label{fe}%
\end{equation}
in $\mathcal{F}$ with initial values $t_{-i}=x_{-i}-\rho x_{-i-1}$ for
$i=0,1,\ldots,k-1$ and coefficients%
\[
p_{i}=\rho^{i+1}-a_{0}\rho^{i}-\cdots-a_{i}\quad\text{and\quad}q_{i}=b_{0}%
\rho^{i}+b_{1}\rho^{i-1}+\cdots+b_{i}%
\]
in $\mathcal{F}$. Conversely, if $\{t_{n}\}$ is a solution of (\ref{fe}) with
initial values $t_{-i}\in\mathcal{F}$ then the sequence $\{x_{n}\}$ that it
generates in $\mathcal{F}$ via (\ref{cfe}) is a solution of (\ref{gla}).
\end{lemma}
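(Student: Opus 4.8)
The plan is to carry out a reduction of order through the substitution $t_n = x_n - \rho x_{n-1}$, which is exactly the relation (\ref{cfe}). Everything then reduces to two telescoping identities, and the entire content of the lemma is the observation that these identities hold precisely because $\rho$ is a common root of $P$ and $Q$. So I would first isolate and prove these identities, then read off both directions of the equivalence from them.

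Concretely, for an arbitrary sequence $\{x_n\}$ and with $t_m := x_m - \rho x_{m-1}$, I would establish
\[
\sum_{i=0}^{k} b_i x_{n-i} = \sum_{i=0}^{k-1} q_i t_{n-i}, \qquad \rho x_n - \sum_{i=0}^{k} a_i x_{n-i} = \sum_{i=0}^{k-1} p_i t_{n-i}.
\]
To prove the first, substitute $t_{n-i} = x_{n-i} - \rho x_{n-i-1}$ into the right-hand side and collect the coefficient of each $x_{n-j}$. The coefficient of $x_n$ is $q_0 = b_0$; for $1 \le j \le k-1$ the coefficient is $q_j - \rho q_{j-1}$, which equals $b_j$ directly from the definition of $q_j$; and the single leftover boundary term is the coefficient of $x_{n-k}$, namely $-\rho q_{k-1} = -\sum_{j=0}^{k-1} b_j \rho^{k-j}$. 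This must equal $b_k$, and it does exactly when $\sum_{j=0}^{k} b_j \rho^{k-j} = Q(\rho) = 0$. The second identity is identical in structure, using $p_j - \rho p_{j-1} = -a_j$ for the interior coefficients and the boundary relation $\rho p_{k-1} = a_k \iff P(\rho) = 0$. Conceptually, the $q_i$ and $p_i$ are nothing but the coefficients of the quotient polynomials $Q(u)/(u-\rho)$ and $P(u)/(u-\rho)$, so these are the sequence analogues of the factorizations $Q(u) = (u-\rho)\sum_{i=0}^{k-1} q_i u^{k-1-i}$ and $P(u) = (u-\rho)\bigl(u^{k} + \sum_{i=0}^{k-1} p_i u^{k-1-i}\bigr)$.

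With the identities in hand, the forward direction is immediate: if $\{x_n\}$ solves (\ref{gla}), then $t_{n+1} = x_{n+1} - \rho x_n$, and substituting (\ref{gla}) followed by the two identities yields exactly (\ref{fe}). The initial values $t_{-i} = x_{-i} - \rho x_{-i-1}$ are merely the definition of $t$ at the initial indices, and since (\ref{fe}) is a recursion of order $k$ these $k$ values determine $\{t_n\}$ uniquely. For the converse, given a solution $\{t_n\}$ of (\ref{fe}) I would generate $\{x_n\}$ from (\ref{cfe}); then $t_m = x_m - \rho x_{m-1}$ holds by construction for every relevant $m$, so the same two identities apply verbatim and reading the forward computation backwards shows that $\{x_n\}$ satisfies (\ref{gla}).

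The only real obstacle is the coefficient bookkeeping in the two telescoping identities, and specifically recognizing that the single uncancelled boundary term in each is governed precisely by $Q(\rho)$ and $P(\rho)$. This is the one place where the common-root hypothesis is used; once it is isolated, the remainder of the argument, including both the initial-value statement and the converse, is purely formal.
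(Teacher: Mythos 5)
Your proof is correct; note that the paper does not actually prove this lemma but quotes it from \cite{bk2} (Theorem 5.6), and your argument is the standard one behind that source: the substitution $t_n=x_n-\rho x_{n-1}$ plus coefficient matching, where the common-root hypothesis kills the two boundary terms. Indeed, your key relations $q_j-\rho q_{j-1}=b_j$, $-\rho q_{k-1}=b_k$ and $p_j-\rho p_{j-1}=-a_j$, $\rho p_{k-1}=a_k$ (equivalently, that the $p_i,q_i$ are the synthetic-division coefficients of $P(u)/(u-\rho)$ and $Q(u)/(u-\rho)$) are precisely the identities the paper itself verifies in the proof of Lemma \ref{fsor2} when establishing $(u-\rho)P_1(u)=P(u)$ and $(u-\rho)Q_1(u)=Q(u)$, so your route is fully aligned with the paper's machinery.
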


The preceding result shows that Equation (\ref{gla}) splits into the
equivalent pair of equations (\ref{cfe}) and (\ref{fe}) provided that the
polynomials $P$ and $Q$ have a common nonzero root $\rho$. We call the pair of
equations (\ref{cfe}) and (\ref{fe}) a \textit{semiconjugate factorization} of
(\ref{gla}). Equation (\ref{fe}), whose order is one less than the order of
(\ref{gla}) is the\textit{ factor equation} and Equation (\ref{cfe}) which
bridges the order (or dimension) gap between (\ref{gla}) and (\ref{fe}) is the
\textit{cofactor equation}.

Since Equation (\ref{fe}) is of the same type as (\ref{gla}) we may consider
applying Lemma \ref{fsor}\ to (\ref{fe}) to obtain a further reduction of
order. This is done next.

\begin{lemma}
\label{fsor2}Let $k\geq2$ and assume that the coefficients of (\ref{gla}) are
complex, i.e., $a_{i},b_{i}\in\mathbb{C}$. Let $\mathcal{F}=\mathbb{C}$ in
Lemma \ref{fsor} and suppose that $g_{n}:\mathbb{C}\rightarrow\mathbb{C}$ are
complex functions for all $n$. If the polynomials $P,Q$ in Lemma \ref{fsor}
have two common, nonzero roots $\rho,\gamma\in\mathbb{C}$ then (\ref{fe}) has
a factor equation
\begin{equation}
r_{n+1}=-\sum_{j=0}^{k-2}p_{j}^{\prime}r_{n-j}+g_{n}\left(  \sum_{j=0}%
^{k-2}q_{j}^{\prime}r_{n-j}\right)  \label{fe1}%
\end{equation}
with coefficients%
\[
p_{j}^{\prime}=\gamma^{j+1}+p_{0}\gamma^{j}+\cdots+p_{j}\quad\text{and\quad
}q_{j}^{\prime}=q_{0}\gamma^{j}+q_{1}\gamma^{j-1}+\cdots+q_{j}.
\]
where the numbers $p_{j},q_{j}$ are as defined in Lemma \ref{fsor} in terms of
the root $\rho.$ There are two cofactor equations
\begin{align}
t_{n+1}  &  =\gamma t_{n}+r_{n+1}\label{cfe2a}\\
x_{n+1}  &  =\rho x_{n}+t_{n+1} \label{cfe2b}%
\end{align}
the second of which is just (\ref{cfe}) from Lemma \ref{fsor}. The triangular
system of three equations (\ref{fe1})-(\ref{cfe2b}) is equivalent to
(\ref{gla}) in the sense of Lemma \ref{fsor}; i.e., they generate the same set
of solutions.
\end{lemma}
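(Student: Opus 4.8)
The plan is to apply Lemma~\ref{fsor} a second time, now to the factor equation (\ref{fe}), using the second common root $\gamma$. The first observation is that (\ref{fe}) is itself an instance of (\ref{gla}): written as $t_{n+1}=\sum_{i=0}^{k-1}(-p_i)t_{n-i}+g_n(\sum_{i=0}^{k-1}q_i t_{n-i})$, it has the same shape with order lowered by one, i.e.\ with $k$ replaced by $k-1$ and with new coefficients $a_i'=-p_i$ and $b_i'=q_i$. The polynomials that Lemma~\ref{fsor} associates to (\ref{fe}) are therefore $P'(u)=u^k+\sum_{i=0}^{k-1}p_i u^{k-1-i}$ and $Q'(u)=\sum_{i=0}^{k-1}q_i u^{k-1-i}$, and to reduce (\ref{fe}) further I need $\gamma$ to be a common nonzero root of $P'$ and $Q'$.

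The crux is to show that $P'$ and $Q'$ are exactly the quotients obtained by dividing $P$ and $Q$ by $(u-\rho)$; that is,
\[
P(u)=(u-\rho)P'(u),\qquad Q(u)=(u-\rho)Q'(u).
\]
I would verify these two identities by performing synthetic division of $P$ and $Q$ by $(u-\rho)$ and checking that the successive coefficients produced are precisely $p_0,\dots,p_{k-1}$ and $q_0,\dots,q_{k-1}$. Concretely, the recursion of synthetic division reproduces the defining formulas $p_i=\rho^{i+1}-a_0\rho^i-\cdots-a_i$ and $q_i=b_0\rho^i+\cdots+b_i$ of Lemma~\ref{fsor} by induction on $i$, while the final remainders are $P(\rho)=0$ and $Q(\rho)=0$, which hold because $\rho$ is a common root. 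This is the step I expect to carry the real content; everything else is bookkeeping.

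Granting the factorizations, I conclude that $\gamma$ is a root of both $P'$ and $Q'$: since $P(\gamma)=(\gamma-\rho)P'(\gamma)=0$ and likewise for $Q$, either $\gamma\neq\rho$ forces $P'(\gamma)=Q'(\gamma)=0$, or the two common roots coincide (a double root of $P$ and $Q$) and the same conclusion follows from the factorization. As $\gamma\neq 0$ by hypothesis, Lemma~\ref{fsor} applies to (\ref{fe}) with root $\gamma$. Substituting $a_i'=-p_i$, $b_i'=q_i$ and $\gamma$ into the coefficient formulas of Lemma~\ref{fsor} yields exactly $p_j'=\gamma^{j+1}+p_0\gamma^j+\cdots+p_j$ and $q_j'=q_0\gamma^j+\cdots+q_j$, and produces the factor equation (\ref{fe1}) (with the sum now running to $k-2$) together with the cofactor equation (\ref{cfe2a}).

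Finally, the equivalence of the triangular system (\ref{fe1})--(\ref{cfe2b}) with (\ref{gla}) follows by chaining the two applications of Lemma~\ref{fsor}: that lemma gives (\ref{gla}) $\Leftrightarrow$ \{(\ref{cfe}),(\ref{fe})\}, and the second application gives (\ref{fe}) $\Leftrightarrow$ \{(\ref{cfe2a}),(\ref{fe1})\}; since (\ref{cfe2b}) is just (\ref{cfe}), combining the two chains shows the three equations generate the same solution set as (\ref{gla}), with the initial values for $\{r_n\}$ inherited, via $r_{-j}=t_{-j}-\gamma t_{-j-1}$, from those prescribed in Lemma~\ref{fsor}.
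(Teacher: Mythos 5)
Your proposal is correct and takes essentially the same route as the paper: both identify the polynomials associated with the factor equation (\ref{fe}) as the quotients of $P,Q$ upon division by $(u-\rho)$ (your synthetic-division recursion is exactly the paper's coefficient-matching via $p_j-\rho p_{j-1}=-a_j$ and $\rho p_{k-1}=a_k$), and then reapply Lemma \ref{fsor} to (\ref{fe}) with the root $\gamma$. The only minor divergences are cosmetic and both sound: in the double-root case $\gamma=\rho$ you deduce that $(u-\rho)$ divides the quotient from $(u-\rho)^2\mid P$, which is slightly cleaner than the paper's explicit computation showing $P_1(\rho)$ equals the derivative $P^{\prime}(\rho)$, and you obtain the final equivalence by chaining the two-way statement of Lemma \ref{fsor} twice rather than citing Theorem 3.1 of \cite{bk2}.
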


\begin{proof}
Consider the polynomials associated with the factor equation (\ref{fe}), i.e.,%
\[
P_{1}(u)=u^{k}+\sum_{j=0}^{k-1}p_{j}u^{k-j-1},\quad Q_{1}(u )=\sum_{j=0}%
^{k-1}q_{j}u^{k-j-1}.
\]

Let $\rho$ be a root of $P.$ We claim that%
\[
(u-\rho)P_{1}(u)=P(u).
\]

This is established by straightforward calculation:%
\begin{align*}
(u-\rho)P_{1}(u)  &  =(u-\rho)\left(  u^{k}+\sum_{j=0}^{k-1}p_{j}%
u^{k-j-1}\right) \\
&  =u^{k+1}-\rho p_{k-1}+\sum_{j=0}^{k-1}(p_{j}-\rho p_{j-1})u^{k-j}%
\end{align*}

where we define $p_{-1}=1$ to simplify the notation. Using the definition of
the numbers $p_{i}$ in Lemma \ref{fsor} we obtain%
\[
p_{j}-\rho p_{j-1}=-a_{j}%
\]

and further, since $P(\rho)=0$ we obtain%
\begin{align*}
\rho p_{k-1}  &  =\rho(\rho^{k}-a_{0}\rho^{k-1}-\cdots-a_{k-1})\\
&  =P(\rho)+a_{k}\\
&  =a_{k}%
\end{align*}

which completes the proof of the claim. A similar argument shows that if
$\rho$ is a root of $Q$ then%
\[
(u-\rho)Q_{1}(u)=Q(u).
\]

Now, suppose that $\gamma$ is also a common root of $P$ and $Q.$ If
$\gamma\not =\rho$ then clearly $P_{1}(\gamma)=Q_{1}(\gamma)=0$ so $\gamma$ is
a common root of $P_{1}$ and $Q_{1}$. Otherwise, $\gamma=\rho$ and $\rho$ is a
double root, hence a zero of the derivatives $P^{\prime}$ and $Q^{\prime},$
i.e.,%
\[
P^{\prime}(\rho)=Q^{\prime}(\rho)=0.
\]

In addition, we find that%
\begin{align*}
P_{1}(\rho)  &  =\rho^{k}+\sum_{j=0}^{k-1}(\rho^{j+1}-a_{0}\rho^{j}%
-\cdots-a_{j-1}\rho-a_{j})\rho^{k-j-1}\\
&  =(k+1)\rho^{k}-\sum_{j=0}^{k-1}(k-j)a_{j}\rho^{k-j-1}\\
&  =P^{\prime}(\rho)
\end{align*}

so that $\rho=\gamma$ is a root of $P_{1}.$ Similarly, $\rho=\gamma$ is also
seen to be a root of $Q_{1}$. Now applying Lemma \ref{fsor} to (\ref{fe})
yields a factor equation (\ref{fe1}) and a cofactor (\ref{cfe2a}).

Finally, the last assertion follows from Theorem 3.1 in \cite{bk2} (or Theorem
6 in \cite{arx}).
\end{proof}

The next result on factorization of polynomials is also needed.

\begin{lemma}
\label{polyfac}Suppose that $\gamma,\rho\in\mathbb{C}$ are roots of the
polynomial $c_{0}u^{m}+c_{1}u^{m-1}+\cdots+c_{m-1}u+c_{m}$ of degree $m\geq2$
with coefficients $c_{j}\in\mathbb{R}$. Then
\begin{equation}
\sum_{j=0}^{m}c_{j}u^{m-j}=(u^{2}-(\gamma+\rho)u+\gamma\rho)\sum_{j=0}%
^{m-2}\alpha_{j}u^{m-j-2} \label{pf}%
\end{equation}
where $\alpha_{0}=c_{0}$,%
\begin{equation}
\alpha_{j}=c_{j}+(\gamma+\rho)\alpha_{j-1}-\gamma\rho\alpha_{j-2},\quad
j=1,2,\ldots,m-2,\ \alpha_{-1}\doteq0, \label{alfj}%
\end{equation}
and the following equalities hold:%
\begin{align}
c_{m-1}+(\gamma+\rho)\alpha_{m-2}-\gamma\rho\alpha_{m-3}  &  =0,\label{cm-1}\\
c_{m}-\gamma\rho\alpha_{m-2}  &  =0. \label{cm}%
\end{align}
Further, if $\gamma$ and $\rho$ are either both real or they are complex
conjugates then the numbers $\alpha_{j}$, $j=1,2,\ldots,m-2$ that satisfy the
recursions (\ref{alfj}) are real and found to be:%
\begin{align}
\alpha_{j}  &  =\sum_{i=0}^{j}\frac{\gamma^{i+1}-\rho^{i+1}}{\gamma-\rho
}c_{j-i},\quad\text{if }\gamma\not =\rho\label{alj1}\\
\alpha_{j}  &  =\sum_{i=0}^{j}(i+1)\rho^{i}c_{j-i},\quad\text{if }\gamma=\rho.
\label{alj2}%
\end{align}
Conversely, let $\sum_{j=0}^{m}c_{j}u^{m-j}$ be a polynomial with real
coefficients $c_{j}$. If $\gamma,\rho\in\mathbb{C}$ and there are real numbers
$\alpha_{j}$ satisfying (\ref{alfj})-(\ref{cm}) then (\ref{pf}) holds and
$\gamma,\rho$ are roots of $\sum_{j=0}^{m}c_{j}u^{m-j}$.
\end{lemma}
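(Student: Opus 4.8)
The plan is to read the identity (\ref{pf}) as the assertion that the quadratic $u^{2}-(\gamma+\rho)u+\gamma\rho=(u-\gamma)(u-\rho)$ divides the given polynomial with quotient $\sum_{j=0}^{m-2}\alpha_{j}u^{m-j-2}$, and to recover the recursion (\ref{alfj}) as the coefficient bookkeeping of that division. Writing $s=\gamma+\rho$ and $p=\gamma\rho$, I would first expand $(u^{2}-su+p)\sum_{j=0}^{m-2}\alpha_{j}u^{m-j-2}$ and collect the coefficient of $u^{m-\ell}$, which comes out to $\alpha_{\ell}-s\alpha_{\ell-1}+p\alpha_{\ell-2}$ under the convention $\alpha_{j}=0$ for $j<0$ or $j>m-2$. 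Setting $\alpha_{0}=c_{0}$ and defining the $\alpha_{j}$ by (\ref{alfj}) makes these coefficients agree with $c_{\ell}$ for $\ell=0,\dots,m-2$, so the difference
\[
R(u)=\sum_{j=0}^{m}c_{j}u^{m-j}-(u^{2}-su+p)\sum_{j=0}^{m-2}\alpha_{j}u^{m-j-2}
\]
is a polynomial of degree at most one, $R(u)=Au+B$, whose coefficients $A$ and $B$ are exactly the left-hand sides of (\ref{cm-1}) and (\ref{cm}). Everything through here is routine synthetic division.

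Next I would use the hypothesis that $\gamma,\rho$ are roots to force $R\equiv0$. Since the quadratic factor vanishes at $u=\gamma$ and $u=\rho$, the displayed identity gives $R(\gamma)=R(\rho)=0$. If $\gamma\neq\rho$, a degree-one polynomial with two distinct zeros is identically zero, so $A=B=0$, which is precisely (\ref{cm-1})-(\ref{cm}). The case needing care is the double root $\gamma=\rho$: there I would differentiate the identity and evaluate at $u=\gamma$, noting that the product term contributes nothing because it carries a factor $(u-\gamma)^{2}$, so $R'(\gamma)$ equals the derivative of the original polynomial at its double root, hence $0$. As $R$ is affine this forces $A=R'(\gamma)=0$ and then $B=0$ from $R(\gamma)=0$. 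Either way (\ref{pf}), (\ref{cm-1}) and (\ref{cm}) hold.

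For reality, I would observe that when $\gamma,\rho$ are both real or complex conjugate we have $s,p\in\mathbb{R}$, so (\ref{alfj}) is a recursion with real coefficients and real seed $\alpha_{0}=c_{0}$, $\alpha_{-1}=0$; induction then gives $\alpha_{j}\in\mathbb{R}$ for all $j$. To obtain the closed forms (\ref{alj1})-(\ref{alj2}) I would pass to generating functions: with $A(x)=\sum_{j}\alpha_{j}x^{j}$ and $C(x)=\sum_{j}c_{j}x^{j}$, the recursion is equivalent to $(1-sx+px^{2})A(x)=C(x)$, i.e. $A(x)=C(x)/[(1-\gamma x)(1-\rho x)]$. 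A partial-fraction expansion of the kernel gives its $i$-th coefficient as $(\gamma^{i+1}-\rho^{i+1})/(\gamma-\rho)$ when $\gamma\neq\rho$ and $(i+1)\rho^{i}$ when $\gamma=\rho$, and reading off the Cauchy product $\alpha_{j}=\sum_{i=0}^{j}h_{i}c_{j-i}$ yields (\ref{alj1}) and (\ref{alj2}) exactly. One can instead verify these closed forms satisfy (\ref{alfj}) directly, but the generating-function route makes the source of the two cases transparent.

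Finally, the converse reverses the first two steps with no new content: if real $\alpha_{j}$ satisfy (\ref{alfj})-(\ref{cm}), the same coefficient comparison shows the remainder $R(u)=Au+B$ has $A$ and $B$ equal to the left sides of (\ref{cm-1}) and (\ref{cm}), hence $R\equiv0$ and (\ref{pf}) is a polynomial identity; evaluating its right-hand side at $u=\gamma$ and $u=\rho$ makes the quadratic factor vanish, so $\gamma,\rho$ are roots of $\sum_{j=0}^{m}c_{j}u^{m-j}$. I expect the only genuinely delicate point to be the double-root subcase, where the derivative condition rather than a second distinct zero is what kills the remainder; the rest is synthetic division together with a standard linear-recurrence solution.
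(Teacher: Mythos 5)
Your proof is correct, and its skeleton---reading (\ref{pf}) as division by $(u-\gamma)(u-\rho)$, recovering (\ref{alfj})--(\ref{cm}) by matching coefficients, deducing reality from the real recursion, and proving the converse by reversing the comparison---is the same as the paper's. Two local differences are worth noting. First, the logical order of the division step: the paper asserts even divisibility outright (since $\gamma,\rho$ are roots) and then identifies the quotient's coefficients, silently treating $\gamma=\rho$ as a double root; you instead define the $\alpha_{j}$ by the recursion, compute the affine remainder $R(u)=Au+B$, and kill it using the root hypothesis, with an explicit derivative argument $R'(\gamma)=0$ in the double-root case. Your version is more careful at exactly the point the paper glosses: for $\gamma=\rho$ a merely simple root the divisibility claim would be false, so both arguments require the double-root reading of the hypothesis, but only yours makes that visible (in the paper's actual application, Theorem \ref{cxrt}, one has $\gamma=\bar{\rho}\neq\rho$, so the issue never bites there). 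Second, the closed forms: the paper verifies (\ref{alj1}) and (\ref{alj2}) by two separate inductions on $j$, using the telescoping identity $(\gamma+\rho)(\gamma^{i+1}-\rho^{i+1})-\gamma\rho(\gamma^{i}-\rho^{i})=\gamma^{i+2}-\rho^{i+2}$, whereas you obtain both at once from the identity $(1-(\gamma+\rho)x+\gamma\rho x^{2})A(x)=C(x)$ and a partial-fraction expansion of the kernel, which also explains structurally why the two cases arise. If you write the generating-function step up, note that the recursion holds only for $j\leq m-2$, so the identity should be taken modulo $x^{m-1}$; this is harmless since (\ref{alj1})--(\ref{alj2}) involve only $c_{0},\ldots,c_{j}$. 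The paper's inductions are self-contained and avoid formal power series; your route is shorter and more transparent. There is no gap in your argument.
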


\begin{proof}
Assume that $\gamma,\rho\in\mathbb{C}$ are roots of $\sum_{j=0}^{m}%
c_{j}u^{m-j}.$ Then this polynomial is evenly divided by the quadratic
polynomial
\begin{equation}
(u-\gamma)(u-\rho)=u^{2}-(\gamma+\rho)u+\gamma\rho\label{q}%
\end{equation}

with a resulting quotient polynomial $\sum_{j=0}^{m-2}\alpha_{j}u^{m-j-2};$
i.e., (\ref{pf}) holds. To determine the coefficients $\alpha_{j}$ of the
quotient, multiply the polynomials on the right hand side of (\ref{pf}) and
rearrange terms to obtain the identity%
\begin{align*}
\sum_{j=0}^{m}c_{j}u^{m-j}  &  =\alpha_{0}u^{m}+(\alpha_{1}-(\gamma
+\rho)\alpha_{0})u^{m-1}+(\alpha_{2}-(\gamma+\rho)\alpha_{1}+\gamma\rho
\alpha_{0})u^{m-2}\\
&  \quad+\cdots+(\alpha_{m-2}-(\gamma+\rho)\alpha_{m-3}+\gamma\rho\alpha
_{m-4})u^{2}+\\
&  \qquad+(-(\gamma+\rho)\alpha_{m-2}+\gamma\rho\alpha_{m-3})u+\gamma
\rho\alpha_{m-2}.
\end{align*}

Now, matching coefficients on the two sides yields (\ref{alfj})-(\ref{cm}).

Next, if $\gamma$ and $\rho$ are either both real or they are complex
conjugates then $(\gamma+\rho)$ and $\gamma\rho$ are both real. In this case,
the numbers $\alpha_{j}$ defined by the recursions (\ref{alfj}) are also real.
Finally, (\ref{alj1}) and (\ref{alj2}) may be proved by induction. First,
suppose that $\gamma\not =\rho.$ For $j=1$ we have%
\[
c_{1}+\frac{\gamma^{2}-\rho^{2}}{\gamma-\rho}c_{0}=c_{1}+(\gamma+\rho
)\alpha_{0}=\alpha_{1}%
\]

so (\ref{alj1}) is true if $j=1.$ Suppose next that for $1\leq j\leq m-3,$
(\ref{alj1}) is true for $1,2,\ldots,j$. Then for $j+1$%
\begin{align*}
\alpha_{j+1}  &  =c_{j+1}+(\gamma+\rho)\alpha_{j}-\gamma\rho\alpha_{j-1}\\
&  =c_{j+1}+(\gamma+\rho)\sum_{i=0}^{j}\frac{\gamma^{i+1}-\rho^{i+1}}%
{\gamma-\rho}c_{j-i}-\gamma\rho\sum_{i=1}^{j}\frac{\gamma^{i}-\rho^{i}}%
{\gamma-\rho}c_{j-i}\\
&  =c_{j+1}+(\gamma+\rho)c_{j}+\sum_{i=1}^{j}\left[  (\gamma+\rho)\frac
{\gamma^{i+1}-\rho^{i+1}}{\gamma-\rho}-\gamma\rho\frac{\gamma^{i}-\rho^{i}%
}{\gamma-\rho}\right]  c_{j-i}.
\end{align*}

Since for each $i=1,\ldots,j$%
\[
(\gamma+\rho)(\gamma^{i+1}-\rho^{i+1})-\gamma\rho(\gamma^{i}-\rho^{i}%
)=\gamma^{i+2}-\rho^{i+2}%
\]

we obtain%
\[
\alpha_{j+1}=c_{j+1}+\frac{\gamma^{2}-\rho^{2}}{\gamma-\rho}c_{j}+\sum
_{i=1}^{j}\frac{\gamma^{i+2}-\rho^{i+2}}{\gamma-\rho}c_{j-i}%
\]

which verifies the induction step. If $\gamma=\rho$ then for $j=1$%
\[
c_{1}+2\rho c_{0}=c_{1}+2\rho\alpha_{0}=\alpha_{1}%
\]

so (\ref{alj2}) is true if $j=1.$ Suppose next that for $1\leq j\leq m-3,$
(\ref{alj2}) is true for $1,2,\ldots,j$. Then for $j+1$%
\begin{align*}
\alpha_{j+1}  &  =c_{j+1}+2\rho\alpha_{j}-\rho^{2}\alpha_{j-1}\\
&  =c_{j+1}+2\rho\sum_{i=0}^{j}(i+1)\rho^{i}c_{j-i}-\rho^{2}\sum_{i=1}%
^{j}i\rho^{i-1}c_{j-i}\\
&  =c_{j+1}+2\rho c_{j}+\sum_{i=1}^{j}[2(i+1)\rho^{i+1}-i\rho^{i+1}]c_{j-i}\\
&  =c_{j+1}+2\rho c_{j}+\sum_{i=1}^{j}(i+2)\rho^{i+1}c_{j-i}%
\end{align*}

which verifies the induction step.

Conversely, if $\gamma,\rho\in\mathbb{C}$ and $\alpha_{j}\in\mathbb{R}$
satisfy (\ref{alfj})-(\ref{cm}) then by the definition of $\alpha_{j}$ the
quadratic polynomial (\ref{q}) divides $\sum_{j=0}^{m}c_{j}u^{m-j}$ evenly.
Therefore, $\gamma,\rho$ are roots of $\sum_{j=0}^{m}c_{j}u^{m-j}$.
\end{proof}

\medskip

If the coefficients $a_{i},b_{i}$ in (\ref{gla}) are real and a common root
$\rho$ of $P$ and $Q$ is complex then these polynomials also share another
complex root, namely, the conjugate $\bar{\rho}$; thus, Lemma \ref{fsor2} is
applicable. However, if the functions $g_{n}:\mathbb{R}\rightarrow\mathbb{R}$
are real functions then a direct application of Lemma \ref{fsor2} is
problematic since the coefficients $p_{i},q_{i}$ of the factor equation
(\ref{fe}) are complex. The next result shows that this difficulty does not
actually arise since the coefficients $p_{i}^{\prime},q_{i}^{\prime}$ of the
secondary factor equation (\ref{fe1}) are in fact, real and furthermore, the
two complex cofactor equations in Lemma \ref{fsor2} combine into a single
second-order cofactor equation in $\mathbb{R}$.

\begin{theorem}
\label{cxrt}Let $k\geq2$ in (\ref{gla}) and assume that the coefficients
$a_{i},b_{i}$ are all real and $g_{n}:\mathbb{R}\rightarrow\mathbb{R}$ for
$n\geq0.$ If the polynomials $P,Q$ in Lemma \ref{fsor} have a common complex
root $\rho=\mu e^{i\theta}\not \in \mathbb{R}$ then the following statements
are true:

(a) The coefficients $p_{j}^{\prime},q_{j}^{\prime}$ of the factor equation
(\ref{fe1}) in Lemma \ref{fsor2} are real numbers that may be writtern in
terms of the original coefficients $a_{i},b_{i}$ of (\ref{gla}) as%
\begin{align}
p_{j}^{\prime}  &  =\mu^{j+1}\frac{\sin(j+2)\theta}{\sin\theta}-\frac{1}%
{\sin\theta}\sum_{m=0}^{j}a_{m}\,\mu^{j-m}\sin(j-m+1)\theta,\label{p1j}\\
q_{j}^{\prime}  &  =\frac{1}{\sin\theta}\sum_{m=0}^{j}b_{m}\,\mu^{j-m}%
\sin(j-m+1)\theta\label{q1j}%
\end{align}
for $j=0,1,\ldots,k-2.$

(b) The pair of first-order cofactor equations in Lemma \ref{fsor2} with
complex coefficients $\rho$ and $\gamma=\bar{\rho}$ combine into one
equivalent, second-order, non-homogeneous linear equation with real
coefficients%
\begin{equation}
x_{n+1}-2\mu\cos\theta\,x_{n}+\mu^{2}x_{n-1}=r_{n+1} \label{cfe2}%
\end{equation}
where the sequence $\{r_{n}\}$ is a solution of the factor equation
(\ref{fe1}) in $\mathbb{R}$.

(c) The system of equations (\ref{fe1}) and (\ref{cfe2}) is equivalent to
(\ref{gla}); i.e., the set of solutions of (\ref{cfe2}) with $\{r_{n}\}$
satisfying (\ref{fe1}) is identical with the set of solutions of (\ref{gla}).
\end{theorem}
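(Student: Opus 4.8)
The plan is to establish each of the three parts by applying the already-proven lemmas with the specific choice $\gamma=\bar\rho$ and then converting complex expressions into real trigonometric ones using $\rho=\mu e^{i\theta}$. For part (a), I would invoke Lemma \ref{fsor2} with the two common roots $\rho$ and $\bar\rho$ (which are distinct since $\rho\notin\mathbb{R}$), so that the secondary factor equation \eqref{fe1} has coefficients $p_j'$ and $q_j'$. To get the explicit real formulas, the cleanest route is to reuse Lemma \ref{polyfac} applied to the polynomials $P_1$ and $Q_1$ attached to the factor equation \eqref{fe}: since $\rho,\bar\rho$ are common roots of $P_1,Q_1$ (as shown inside the proof of Lemma \ref{fsor2}), the quotient coefficients are exactly the $\alpha_j$ of Lemma \ref{polyfac}. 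I would then substitute $\gamma+\rho=2\mu\cos\theta$ and $\gamma\rho=\mu^2$ and simplify the closed form \eqref{alj1}, using $\gamma^{i+1}-\rho^{i+1}=\bar\rho^{\,i+1}-\rho^{i+1}=-2i\,\mu^{i+1}\sin(i+1)\theta$ and $\gamma-\rho=\bar\rho-\rho=-2i\,\mu\sin\theta$, so the ratio $(\gamma^{i+1}-\rho^{i+1})/(\gamma-\rho)$ collapses to $\mu^i\sin(i+1)\theta/\sin\theta$. Matching the resulting sums against the definitions of $p_j,q_j$ from Lemma \ref{fsor} (written again in trigonometric form) yields \eqref{p1j}--\eqref{q1j}.

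For part (b), I would start from the two first-order cofactor equations \eqref{cfe2a}--\eqref{cfe2b} of Lemma \ref{fsor2}, namely $t_{n+1}=\bar\rho\,t_n+r_{n+1}$ and $x_{n+1}=\rho x_n+t_{n+1}$, where here I take $\gamma=\bar\rho$. The idea is to eliminate the auxiliary sequence $\{t_n\}$. Solving the second equation for $t_{n+1}=x_{n+1}-\rho x_n$ and likewise $t_n=x_n-\rho x_{n-1}$, and substituting into the first equation, gives $x_{n+1}-\rho x_n=\bar\rho(x_n-\rho x_{n-1})+r_{n+1}$, i.e. $x_{n+1}-(\rho+\bar\rho)x_n+\rho\bar\rho\,x_{n-1}=r_{n+1}$. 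Since $\rho+\bar\rho=2\mu\cos\theta$ and $\rho\bar\rho=\mu^2$ are real, this is precisely \eqref{cfe2} with real coefficients, and the forcing term $\{r_n\}$ is real because it solves \eqref{fe1} with the real coefficients from part (a). I should note that this elimination is an equivalence: given a real solution $\{x_n\}$ of \eqref{cfe2}, defining $t_n=x_n-\rho x_{n-1}$ recovers a solution of the cofactor pair, and conversely.

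For part (c), the equivalence of the triangular system to \eqref{gla} is essentially inherited from Lemma \ref{fsor2}, which already asserts that \eqref{fe1}--\eqref{cfe2b} generate the same solution set as \eqref{gla}; all that remains is to observe that replacing the two first-order cofactor equations by the single second-order equation \eqref{cfe2} does not change this solution set, which is exactly the elimination argument of part (b) run in both directions. The one subtlety I would be careful about is the reality of the intermediate sequences: although $\{x_n\}$ and $\{r_n\}$ are real, the sequence $\{t_n\}=\{x_n-\rho x_{n-1}\}$ is genuinely complex, so I must confirm that every real solution of the combined real system \eqref{fe1}--\eqref{cfe2} does correspond, through this complex intermediary, to a bona fide solution of \eqref{gla} in $\mathbb{R}$, and that no spurious solutions are introduced. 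The main obstacle is therefore not the elimination itself but the trigonometric simplification in part (a): carefully reducing the complex closed forms \eqref{alj1} to the real $\sin$-quotient expressions and matching them term-by-term with \eqref{p1j}--\eqref{q1j} is where the bookkeeping is heaviest, and I would verify the identity $(\gamma+\rho)(\gamma^{i+1}-\rho^{i+1})-\gamma\rho(\gamma^i-\rho^i)=\gamma^{i+2}-\rho^{i+2}$ (already used in Lemma \ref{polyfac}) translates correctly into the product-to-sum trigonometric step.
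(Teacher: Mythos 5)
Your parts (b) and (c) match the paper's proof essentially verbatim: the paper also eliminates $\{t_n\}$ via $t_{n+1}=x_{n+1}-\rho x_n$ to get $x_{n+1}-(\rho+\bar{\rho})x_n+\rho\bar{\rho}\,x_{n-1}=r_{n+1}$, runs the substitution in both directions for equivalence, and settles (c) by citing the general equivalence theorem behind Lemma \ref{fsor2}. The genuine gap is in your part (a). You propose to apply Lemma \ref{polyfac} to $P_1$ and $Q_1$, asserting that $\rho$ and $\bar{\rho}$ are common roots of $P_1,Q_1$ ``as shown inside the proof of Lemma \ref{fsor2}.'' That proof shows only that the \emph{second} root $\gamma=\bar{\rho}$ is a root of $P_1$ and $Q_1$; the root $\rho$ has already been divided out, and $P_1(\rho)=P^{\prime}(\rho)$ vanishes only if $\rho$ is a multiple root of $P$, which is not part of the hypothesis. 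So the quadratic $(u-\rho)(u-\bar{\rho})$ does not divide $P_1$ in general, and a degree count confirms the plan cannot work: dividing the degree-$k$ polynomial $P_1$ by a quadratic would produce a quotient of degree $k-2$, whereas $P_2$, whose coefficients are the $p_j^{\prime}$ for $j=0,\ldots,k-2$, has degree $k-1$. In addition, Lemma \ref{polyfac} hypothesizes real coefficients $c_j$, while $P_1,Q_1$ carry the complex coefficients $p_j,q_j$, so even the reality of the resulting $\alpha_j$ would not be available on your route.

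The repair is exactly the paper's move: apply Lemma \ref{polyfac} directly to $P$ and $Q$ themselves, whose coefficients are the real numbers built from $a_i,b_i$ and which genuinely have both conjugate roots $\rho,\bar{\rho}$ (distinct, since $\rho\notin\mathbb{R}$). Lemma \ref{fsor2}, applied twice, gives $P=(u-\rho)(u-\bar{\rho})P_2$ and $Q=(u-\rho)(u-\bar{\rho})Q_2$ with $P_2,Q_2$ carrying the $p_j^{\prime},q_j^{\prime}$; uniqueness of the quotient identifies these (up to the stated sign conventions) with the $\alpha_j$ of (\ref{alj1}), and then your trigonometric simplification, $(\gamma^{m}-\rho^{m})/(\gamma-\rho)=\mu^{m-1}\sin m\theta/\sin\theta$, which is precisely the identity the paper uses, yields (\ref{p1j})--(\ref{q1j}). (Alternatively, you could bypass Lemma \ref{polyfac} altogether by expanding $p_j^{\prime}=\bar{\rho}^{\,j+1}+p_0\bar{\rho}^{\,j}+\cdots+p_j$ with $p_i=\rho^{i+1}-a_0\rho^{i}-\cdots-a_i$ and summing the resulting double sum, but as written your appeal to Lemma \ref{polyfac} on $P_1,Q_1$ is a real error, not a bookkeeping issue.) Your part (c) worry about the complex intermediary $\{t_n\}$ is legitimate and is easily discharged: $r_n=t_n-\bar{\rho}t_{n-1}=x_n-2\mu\cos\theta\,x_{n-1}+\mu^{2}x_{n-2}$ is real whenever $\{x_n\}$ is real, so the initial values, and hence the entire solution, of (\ref{fe1}) remain in $\mathbb{R}$.
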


\begin{proof}
(a) Let $\rho=\mu e^{i\theta}=\mu\cos\theta+i\mu\sin\theta$ and $\gamma
=\bar{\rho}$ be complex conjugate roots of both $P$ and $Q$ with $\sin
\theta\not =0$ since $\rho\not \in \mathbb{R}$. Recall from the proof of Lemma
\ref{fsor2} that
\[
P(u)=(u-\rho)P_{1}(u),\quad Q(u)=(u-\rho)Q_{1}(u)
\]

Applying the same argument to the polynomials $P_{1}$ and $Q_{1}$ using their
common root $\bar{\rho}$ yields%
\begin{align*}
P(u)  &  =(u-\rho)(u-\bar{\rho})P_{2}(u)=(u^{2}-(\rho+\bar{\rho})u+\rho
\bar{\rho})P_{2}(u),\\
Q(u)  &  =(u-\rho)(u-\bar{\rho})Q_{2}(u)=(u^{2}-(\rho+\bar{\rho})u+\rho
\bar{\rho})Q_{2}(u)
\end{align*}

where
\[
P_{2}(u)=u^{k-1}-\sum_{j=0}^{k-2}p_{j}^{\prime}u^{k-j-2},\quad Q_{2}%
(u)=\sum_{j=0}^{k-2}q_{j}^{\prime}u^{k-j-2}.
\]

Applying Lemma \ref{polyfac} to each of $P$ and $Q$ we obtain (\ref{p1j}) and
(\ref{q1j}) from (\ref{alj1}) since for every positive integer $m,$%
\[
\frac{\gamma^{m}-\rho^{m}}{\gamma-\rho}=\frac{-\mu^{m}(e^{i\theta
m}-e^{-i\theta m})}{-\mu(e^{i\theta}-e^{-i\theta})}=\mu^{m-1}\frac{\sin
m\theta}{\sin\theta}.
\]

(b) Eliminate $t_{n+1}$ and $t_{n}$ from (\ref{cfe2a}) using (\ref{cfe2b}) to
obtain%
\begin{gather*}
x_{n+1}-\rho x_{n}=\gamma(x_{n}-\rho x_{n-1})+r_{n+1},\quad\text{or:}\\
x_{n+1}-(\rho+\gamma)x_{n}+\rho\gamma x_{n-1}=r_{n+1}%
\end{gather*}

which is the same as (\ref{cfe2}). Now if $\{x_{n}\}$ is a solution of
(\ref{cfe2}) with a given sequence $\{r_{n}\}$ then by the preceding argument,
the sequence $\{x_{n}-\rho x_{n-1}\}$ satisfies (\ref{cfe2a}). Further, with
$t_{n}=x_{n}-\rho x_{n-1}$ it is clear that $\{x_{n}\}$ satisfies
(\ref{cfe2b}) so that the sequence of pairs $\{(t_{n},x_{n})\}$ is a solution
of the system of equations (\ref{cfe2a}) and (\ref{cfe2b}). Conversely, if
$\{(t_{n},x_{n})\}$ is a solution of the system then the above construction
shows that $\{x_{n}\}$ satisfies (\ref{cfe2}). Therefore, the same set of
solutions $\{x_{n}\}$ is obtained; i.e., the system is equivalent to the
second-order equation.

(c) The equivalence of the system of equations (\ref{fe1}) and (\ref{cfe2}) to
(\ref{gla}) is a consequence of Theorem 3.1 in \cite{bk2} (or Theorem 6 in
\cite{arx}).
\end{proof}

\pagebreak

\noindent\textbf{Remarks.}

\begin{enumerate}
\item Theorem \ref{cxrt} shows that the existence of a common \textit{complex}
(non-real) root $\rho$ for the polynomials $P,Q$ leads to a type-$(k-1,2)$
reduction, or factorization, of (\ref{gla}) over the real numbers. Over the
field of complex numbers $\mathbb{C}$, this reduction is equivalent to
repeated type-$(k,1)$ reductions as outlined in Lemmas \ref{fsor} and
\ref{fsor2}; see \cite{bk2} for the general background on reduction types.

\item The parameters $a_{j},b_{j}$, $j=k-1,k$ which affect $\rho$ but do not
appear in (\ref{p1j}) and (\ref{q1j}) are not free. They satisfy (\ref{cm-1})
and (\ref{cm}) in Lemma \ref{polyfac} and for the complex conjugate pair of
roots in Theorem \ref{cxrt} they take the forms
\begin{align}
a_{k}  &  =p_{k-2}^{\prime}\mu^{2},\quad a_{k-1}=p_{k-3}^{\prime}\mu
^{2}-2p_{k-2}^{\prime}\mu\cos\theta;\label{ak-1k}\\
b_{k}  &  =q_{k-2}^{\prime}\mu^{2},\quad b_{k-1}=q_{k-3}^{\prime}\mu
^{2}-2q_{k-2}^{\prime}\mu\cos\theta. \label{bk-1k}%
\end{align}

Here we assume that $p_{-1}^{\prime}=1$ and $q_{-1}^{\prime}=0$ when $k=2.$
\end{enumerate}

\section{Boundedness and periodicity}

In this section we use reduction of order and factorization methods of the
preceding section to prove the existence of oscillations in the real solutions
of certain difference equations of type (\ref{gla}). Convergence and global
attractivity issues regarding this equation are discussed in \cite{arx2} in at
a much more general level.

We quote the next result from the literature as a lemma; see \cite{SKy} or
Section 5.5 in \cite{bk2}. This result pertains to Equation (\ref{cfe2b})
whose solution may be written in the following way:%
\begin{equation}
x_{n}=\rho^{n}x_{0}+\sum_{j=1}^{n}\rho^{n-j}t_{j}. \label{cfs}%
\end{equation}

\begin{lemma}
\noindent\label{per}(periodicity, limit cycles, boundedness) Let $p$ be a
positive integer and\textit{\ let }$\rho\in\mathbb{C}$ with $\rho\not =0.$

(a) \textit{If for a given sequence }$\{t_{n}\}$\textit{ of complex numbers
Eq.(\ref{cfs}) has a solution }$\{x_{n}\}$\textit{ of period }$p$\textit{ then
}$\{t_{n}\}$\textit{ is periodic with period} $p.$

(b) \textit{Let }$\{t_{n}\}$\textit{ be a periodic sequence of complex numbers
with prime (or minimal) period }$p$ and assume that $\rho$ is not a $p$-th
root of unity; i.e., $\rho^{p}\not =1.$\textit{ If }$\{\tau_{0},\ldots
,\tau_{p-1}\}$\textit{ is one cycle of }$\{t_{n}\}$ \textit{and}%
\begin{equation}
\xi_{i}=\frac{1}{1-\rho{}\text{\/\hspace{0.01in}}^{p}{}}\sum_{j=0}^{p-1}%
\rho^{p-j-1}\tau_{(i+j)\operatorname{mod}p}\quad i=0,1,\ldots,p-1 \label{xi}%
\end{equation}
\textit{then the solution }$\{x_{n}\}$\textit{ of Eq.(\ref{cfs}) with }%
$x_{0}=\xi_{0}$\textit{ and }$t_{1}=\tau_{0}$ \textit{has prime period }%
$p$\textit{ and }$\{\xi_{0},\ldots,\xi_{p-1}\}$\textit{ is a cycle of
}$\{x_{n}\}.$

(c) If $|\rho|<1$ and $\{t_{n}\}$ is a sequence that converges to a p-cycle
then the sequence $\{x_{n}\}$ that is generated by \textit{(\ref{cfs})
converges to a }$p$-cycle\textit{. If }$\{\tau_{0},\ldots,\tau_{p-1}%
\}$\textit{ is one cycle of the limit of }$\{t_{n}\}$ then $\{\xi_{0}%
,\ldots,\xi_{p-1}\}$\textit{ is a cycle of the limit of }$\{x_{n}\}$ where
$\xi_{i}$ is defined by (\ref{xi}).

(d) If $|\rho|<1$ and $\{t_{n}\}$ is a bounded sequence with $|t_{n}|\leq M$
for all $n$ then the sequence $\{x_{n}\}$ that is generated by
\textit{(\ref{cfs}) is also bounded and there is a positive integer }$N$ such
that%
\[
|x_{n}|\leq|\rho|+\frac{M}{1-|\rho|}\quad\text{for all }n\geq N.
\]

\end{lemma}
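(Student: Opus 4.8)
The plan is to use the single structural fact underlying (\ref{cfs}): it is exactly the closed form of the first-order linear recursion $x_{n+1}=\rho x_n+t_{n+1}$, or equivalently $t_{n+1}=x_{n+1}-\rho x_n$ for every $n$. Almost every assertion reduces to this identity together with elementary estimates on the geometric-type sums appearing in (\ref{cfs}). For part (a) I would argue directly: if $\{x_n\}$ has period $p$, then $t_{n+1+p}=x_{n+1+p}-\rho x_{n+p}=x_{n+1}-\rho x_n=t_{n+1}$, so $\{t_n\}$ is $p$-periodic. For part (d) I would estimate (\ref{cfs}) by the triangle inequality: $|x_n|\le|\rho|^n|x_0|+M\sum_{j=1}^n|\rho|^{n-j}\le|\rho|^n|x_0|+M/(1-|\rho|)$, and since $|\rho|<1$ forces $|\rho|^n|x_0|\to0$, there is an $N$ with $|\rho|^n|x_0|\le|\rho|$ for $n\ge N$, giving the stated bound.

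For part (b) the key is to impose the period condition $x_p=x_0$ on (\ref{cfs}). Writing $x_p=\rho^p x_0+\sum_{j=1}^p\rho^{p-j}t_j$ and solving $(1-\rho^p)x_0=\sum_{j=1}^p\rho^{p-j}t_j$ — which is solvable precisely because $\rho^p\neq1$ — and reindexing with $t_j=\tau_{(j-1)\bmod p}$ (the choice $t_1=\tau_0$ fixing the phase) yields $x_0=\xi_0$ exactly as in (\ref{xi}). Once $x_p=x_0$ is secured, full periodicity $x_{n+p}=x_n$ follows from uniqueness of solutions of the recursion driven by the $p$-periodic forcing: the shifted sequence $\{x_{n+p}\}$ satisfies the same recursion with the same initial value $x_p=x_0$, hence coincides with $\{x_n\}$. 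Applying the same period condition started at index $i$, namely $(1-\rho^p)x_i=\sum_{l=1}^p\rho^{p-l}t_{i+l}$, and reindexing gives $x_i=\xi_i$, identifying the cycle. To see the prime period is exactly $p$, I would suppose it is a divisor $d\mid p$; then $\{x_n\}$ has period $d$, so by the part-(a) computation $\{t_n\}$ has period $d$, which forces $p\mid d$ since $\{t_n\}$ has prime period $p$, whence $d=p$.

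For part (c) I would use linearity. Let $\{\hat t_n\}$ denote the $p$-periodic limit cycle of $\{t_n\}$ and let $\{\hat x_n\}$ be the periodic solution produced by part (b), whose cycle is $\{\xi_i\}$. Setting $z_n=x_n-\hat x_n$ gives $z_{n+1}=\rho z_n+(t_{n+1}-\hat t_{n+1})$, hence $z_n=\rho^n z_0+\sum_{j=1}^n\rho^{n-j}(t_j-\hat t_j)$. Since $|\rho|<1$ the term $\rho^n z_0$ vanishes, and the convolution tail vanishes by the standard splitting of the sum into a fixed initial block (annihilated by the factor $|\rho|^{n-j}$ as $n\to\infty$) and a remaining tail bounded by $\varepsilon/(1-|\rho|)$ once $|t_j-\hat t_j|<\varepsilon$. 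Thus $z_n\to0$ and $x_n$ approaches the $p$-cycle $\{\xi_i\}$ with the correct phase.

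The step I expect to require the most care is this limit argument in part (c): controlling the discrete convolution $\sum_{j=1}^n\rho^{n-j}(t_j-\hat t_j)$ and verifying that convergence of $\{t_n\}$ to its cycle transfers to convergence of $\{x_n\}$ with matching phase. The uniqueness-and-propagation argument and the prime-period bookkeeping in part (b) are the other points where precision is needed, but they are combinatorial rather than analytic; everything else is a direct consequence of the identity $t_{n+1}=x_{n+1}-\rho x_n$ and the geometric bound $\sum_{j=1}^n|\rho|^{n-j}<1/(1-|\rho|)$.
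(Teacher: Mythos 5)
Your proof is correct, but note that there is no in-paper argument to compare it against: the paper states this lemma as a quotation from the literature (see \cite{SKy} and Section 5.5 of \cite{bk2}) and gives no proof of it. Your route is the standard one those sources use: read (\ref{cfs}) as the closed form of the cofactor recursion $x_{n+1}=\rho x_{n}+t_{n+1}$, get (a) from the identity $t_{n+1}=x_{n+1}-\rho x_{n}$, obtain (\ref{xi}) by imposing $x_{i+p}=x_{i}$ and solving the resulting linear equation (solvable precisely because $\rho^{p}\neq 1$), propagate periodicity by uniqueness of solutions under the $p$-periodic forcing, and settle (c) and (d) with the geometric bound $\sum_{j=1}^{n}|\rho|^{n-j}<1/(1-|\rho|)$ together with the standard splitting of the convolution $\sum_{j=1}^{n}\rho^{n-j}(t_{j}-\hat{t}_{j})$ into an initial block killed by $|\rho|^{n-j}$ and a small tail. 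Your bookkeeping at the two delicate points is sound: the prime-period claim in (b) correctly combines $d\mid p$ (minimal period divides any period) with $p\mid d$ (from part (a) applied at period $d$), and in (d) you recover the paper's slightly odd constant $|\rho|+M/(1-|\rho|)$ by choosing $N$ with $|\rho|^{n}|x_{0}|\leq|\rho|$ for $n\geq N$, which is legitimate since $\rho\neq 0$. One point worth stating explicitly rather than leaving implicit: in part (c) your appeal to part (b) to construct the comparison solution $\{\hat{x}_{n}\}$ is justified because $|\rho|<1$ automatically rules out $\rho^{p}=1$; with that remark added, the argument is complete and self-contained, which is more than the paper itself provides.
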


Lemma \ref{per} and Theorem \ref{cxrt} imply the following result.

\begin{corollary}
\label{perb}Let $k\geq2$ in (\ref{gla}) and assume that the coefficients
$a_{i},b_{i}$ are real and $g_{n}:\mathbb{R}\rightarrow\mathbb{R}$ for
$n\geq0.$ If the polynomials $P,Q$ in Lemma \ref{fsor} have a common complex
root $\rho=\mu e^{i\theta}\not \in \mathbb{R}$ then the following statements
are true:

(a) If $\rho$ is not a $p$-th root of unity then for every periodic solution
of (\ref{fe1}) of prime period p (\ref{gla}) has a periodic solution of prime
period p that is given by (\ref{xi}).

(b) If modulus $|\rho|<1$ then for every limit cycle (attracting periodic
solution) of (\ref{fe1}) of period p (\ref{gla}) has a limit cycle of period
$p.$

(c) If modulus $|\rho|<1$ then for every bounded solution of (\ref{fe1}) the
corresponding solution of (\ref{gla}) is bounded. Hence, if every solution of
(\ref{fe1}) is bounded then every solution of (\ref{gla})\ is bounded.
\end{corollary}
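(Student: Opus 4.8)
The plan is to combine Theorem \ref{cxrt} with Lemma \ref{per}, applying the latter twice. By Theorem \ref{cxrt}(a) and (c), equation (\ref{gla}) is equivalent to the system consisting of the real factor equation (\ref{fe1}) and the real second-order cofactor equation (\ref{cfe2}); moreover, as shown in the proof of Theorem \ref{cxrt}(b), equation (\ref{cfe2}) is precisely the elimination of the pair of first-order complex cofactor equations (\ref{cfe2a}) and (\ref{cfe2b}), namely $t_{n+1}=\gamma t_n+r_{n+1}$ with $\gamma=\bar\rho$ and $x_{n+1}=\rho x_n+t_{n+1}$. Each of these has the form (\ref{cfe2b}) with solution representation (\ref{cfs}), so Lemma \ref{per} applies to each separately: once with root $\gamma$ and forcing $\{r_n\}$, and once with root $\rho$ and forcing $\{t_n\}$. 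Two elementary facts will be used throughout: $|\gamma|=|\rho|=\mu$, and $\gamma^p=\overline{\rho^p}$, so that $\gamma^p=1$ if and only if $\rho^p=1$.

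For (a), I start from a periodic solution $\{r_n\}$ of (\ref{fe1}) of prime period $p$, which is real since the coefficients in (\ref{fe1}) are real by Theorem \ref{cxrt}(a). Because $\rho^p\neq 1$ we also have $\gamma^p\neq 1$, so Lemma \ref{per}(b) applied to (\ref{cfe2a}) produces from $\{r_n\}$ a periodic sequence $\{t_n\}$ of prime period $p$ via (\ref{xi}) (with $\gamma$ in place of $\rho$). Feeding this $\{t_n\}$ into (\ref{cfe2b}) and applying Lemma \ref{per}(b) a second time yields a periodic solution $\{x_n\}$ of prime period $p$, again given by (\ref{xi}). The equivalence in Theorem \ref{cxrt}(c) then identifies $\{x_n\}$ as a period-$p$ solution of (\ref{gla}).

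Parts (b) and (c) follow the same two-step chaining but invoke the other items of Lemma \ref{per}. When $|\rho|<1$ we also have $|\gamma|<1$: for (b) one applies Lemma \ref{per}(c) to (\ref{cfe2a}) to pass a limit $p$-cycle of $\{r_n\}$ to a limit $p$-cycle of $\{t_n\}$, then applies it to (\ref{cfe2b}) to obtain a limit $p$-cycle of $\{x_n\}$; for (c) one applies Lemma \ref{per}(d) twice to propagate boundedness from $\{r_n\}$ to $\{t_n\}$ and then to $\{x_n\}$, the two explicit estimates compounding. The final assertion of (c) uses the equivalence in the reverse direction: every solution of (\ref{gla}) arises via the system from some solution $\{r_n\}$ of (\ref{fe1}), so if all solutions of (\ref{fe1}) are bounded then so are all solutions of (\ref{gla}).

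The one point that is not pure bookkeeping, and that I expect to be the main obstacle, is reconciling the complex two-step factorization with the fact that (\ref{gla}) and its real cofactor equation (\ref{cfe2}) live over $\mathbb{R}$: the intermediate sequence $\{t_n\}$ is genuinely complex, so I must confirm that the final $\{x_n\}$ is real. For (b) and (c) this is immediate, since choosing real initial values $x_0,x_{-1}$ in the real equation (\ref{cfe2}) with real forcing $\{r_n\}$ already generates a real $\{x_n\}$, and boundedness or convergence of this real sequence is exactly what the two applications of Lemma \ref{per} deliver. For (a) I would argue by uniqueness: since $\rho^p\neq 1$ and $\bar\rho^p\neq 1$, the homogeneous part of (\ref{cfe2}) has no nonzero solution of period $p$, so the period-$p$ solution constructed above is the unique one; its complex conjugate is again a period-$p$ solution of the real equation (\ref{cfe2}) with the same real forcing, hence coincides with it, so $\{x_n\}$ is real. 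This supplies the required real period-$p$ solution of (\ref{gla}).
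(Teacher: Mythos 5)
Your proposal is correct and follows essentially the same route as the paper: the paper's proof likewise splits the real second-order cofactor equation (\ref{cfe2}) back into the complex pair (\ref{cfe2a})--(\ref{cfe2b}) and applies Lemma \ref{per} twice, once with root $\gamma=\bar{\rho}$ and forcing $\{r_n\}$ and once with root $\rho$ and forcing $\{t_n\}$, treating (b) and (c) by the analogous chaining of Lemma \ref{per}(c) and (d). Your uniqueness argument for the reality of the period-$p$ solution in (a) is a valid addition that makes explicit a point the paper's proof only asserts (that the resulting $\{x_n\}$ lies in $\mathbb{R}$).
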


\begin{proof}
We prove only (a) since the proofs of (b) and (c) use similar reasoning using
Lemma\textit{ }\ref{per}. Recall that the second-order cofactor equation
(\ref{cfe2}) in Theorem \ref{cxrt} is equivalent to the pair of first-order
cofactor equations (\ref{cfe2a}) and (\ref{cfe2b}). Let $\{r_{n}\}$ be a
solution of (\ref{fe1}) having prime period $p.$ If $\rho$ is not a $p$-th
root of unity then by Lemma\textit{ }\ref{per}(b) equation (\ref{cfe2a}) has a
solution $\{t_{n}\}$ in $\mathbb{C}$ with prime period $p$. Another
application of Lemma\textit{ }\ref{per} to equation (\ref{cfe2b}) shows that
the solution $\{x_{n}\}$ of (\ref{cfe2}) in $\mathbb{R}$ and hence, of
(\ref{gla}) also has prime period $p$.
\end{proof}

\medskip

It is worth pointing out that if $|\rho|\geq1$ then the periodic solution of
(\ref{gla}) in Corollary \ref{perb}(a) is not attracting even if the
corresponding solution $\{r_{n}\}$ of (\ref{fe1}) is attracting. Therefore,
such solutions may be difficult to identify numerically. Only when $|\rho|<1$
and the homogeneous part of the cofactor equation (\ref{cfe2}) fades away do
the solutions of the factor equation (\ref{fe1}) determine the asymptotic
behavior of solutions of (\ref{gla}).

In closing, we discuss the solutions of a third-order version of (\ref{gla}),
i.e., $k=2$ to illustrate the various aspects of the preceding results.
Consider the autonomous difference equation%
\begin{equation}
x_{n+1}=a_{0}x_{n}+a_{1}x_{n-1}+a_{2}x_{n-2}+g(x_{n}+b_{1}x_{n-1}+b_{2}%
x_{n-2}) \label{gla3}%
\end{equation}
where $a_{0},a_{1},a_{2},b_{1},b_{2}\in\mathbb{R}$ and $g:\mathbb{R}%
\rightarrow\mathbb{R}$. If $a_{2}=b_{2}=0$ then (\ref{gla3}) reduces to an
autonomous version of the second-order equation (\ref{sed1}). We assume here
that $b_{2}\not =0.$

The polynomial $Q$ of (\ref{gla3}) is the quadratic $u^{2}+b_{1}u+b_{2}$ whose
roots are complex if and only if $b_{1}^{2}<4b_{2}.$ These complex conjugate
roots are shared by the polynomial $P$ if and only if conditions (\ref{ak-1k})
hold. Since $k=2$ we calculate%
\begin{align*}
p_{0}  &  =\rho-a_{0},\quad p_{1}=\rho^{2}-a_{0}\rho-a_{1}\\
q_{0}  &  =1,\quad q_{1}=\rho+b_{1}\\
p_{0}^{\prime}  &  =\bar{\rho}+p_{0}=-b_{1}-a_{0},\quad q_{0}^{\prime}=1
\end{align*}

Note that $\rho+\bar{\rho}=-b_{1}$ and $\rho\bar{\rho}=b_{2},$

Thus conditions (\ref{ak-1k}) in this case are%
\begin{equation}
a_{1}=b_{1}(a_{0}+b_{1})-b_{2},\quad a_{2}=b_{2}(a_{0}+b_{1}). \label{ord}%
\end{equation}

We may alternatively obtain (\ref{ord}) using (\ref{cm-1}) and (\ref{cm}). If
$b_{1}^{2}<4b_{2}$ and conditions (\ref{ord}) hold then (\ref{gla3}) is
equivalent to the pair of equations%
\begin{align}
r_{n+1}  &  =(a_{0}+b_{1})r_{n}+g(r_{n}),\label{gla3f}\\
x_{n+1}  &  =-b_{1}\,x_{n}-b_{2}x_{n-1}+r_{n+1} \label{gla3cf}%
\end{align}
for $n\geq0$ where the intitial value of (\ref{gla3f}) is $r_{0}=x_{0}%
+b_{1}\,x_{-1}+b_{2}x_{-2}$ for a given triple of real initial values
$x_{0},x_{-1},x_{-2}$ for (\ref{gla3}).

Next, suppose that $g$ is a rational function of the following type%
\begin{equation}
g(u)=\frac{A}{u}+B+Cu,\quad A,B,C\in\mathbb{R},\ A\not =0. \label{gdef}%
\end{equation}

With this $g$ the difference equation is an example of a third-order rational
recursive equation. A second-order version of this equation is a rational
equation of type (\ref{ro2}) that is studied in \cite{dkmos}.

If $B=0$ and $C=-a_{0}-b_{1}$ then the factor equation (\ref{gla3f}) reduces
to%
\begin{equation}
r_{n+1}=\frac{A}{r_{n}}. \label{recip}%
\end{equation}

Every solution of (\ref{recip}) has period 2 with cycles $\{r_{0},A/r_{0}\}$
as long as $r_{0}\not =0.$ Since $\rho\not =\pm1$, corresponding to each
solution of (\ref{recip}) with period two, the solution of (\ref{gla3}) whose
triple of initial values $(x_{-2},x_{-1,}x_{0})$ is not on the plane
$u+b_{1}v+b_{2}w=0$ (so that $r_{0}\not =0$) has period 2. This plane which
passes through the origin is in fact the singularity (or forbidden) set of
(\ref{gla3}) in this case. The aforementioned periodic solutions of
(\ref{gla3}) have cycles $\{\xi_{0},\xi_{1}\}$ that we calculate in two stages
using (\ref{xi}). First, for (\ref{cfe2a}) with $\gamma=\bar{\rho}$ we
calculate the cycles $\{\tau_{0},\tau_{1}\}$ in $\mathbb{C}$ as%
\[
\tau_{0}=\frac{\bar{\rho}r_{0}+A/r_{0}}{1-\bar{\rho}^{2}},\quad\tau_{1}%
=\frac{r_{0}+\bar{\rho}A/r_{0}}{1-\bar{\rho}^{2}}.
\]

Next, using $\{\tau_{0},\tau_{1}\}$ in (\ref{xi}) we calculate the cycles
$\{\xi_{0},\xi_{1}\}$ for (\ref{gla3})%
\[
\xi_{0}=\frac{\rho\tau_{0}+\tau_{1}}{1-\rho^{2}}=\frac{\rho\bar{\rho}%
r_{0}+\rho A/r_{0}+r_{0}+\bar{\rho}A/r_{0}}{(1-\rho^{2})(1-\bar{\rho}^{2}%
)}=\frac{\rho\bar{\rho}r_{0}+(\rho+\bar{\rho})A/r_{0}+r_{0}}{1-(\rho^{2}%
+\bar{\rho}^{2})+\rho^{2}\bar{\rho}^{2}}%
\]

Since $\rho\bar{\rho}=b_{2}$ and $\rho+\bar{\rho}=-b_{1}$ it follows that
\[
\rho^{2}+\bar{\rho}^{2}=(\rho+\bar{\rho})^{2}-2\rho\bar{\rho}=b_{1}^{2}-2b_{2}%
\]
and thus,%
\[
\xi_{0}=\frac{b_{2}r_{0}-b_{1}A/r_{0}+r_{0}}{1-(b_{1}^{2}-2b_{2})+b_{2}^{2}%
}=\frac{r_{0}^{2}(b_{2}+1)-Ab_{1}}{r_{0}[(b_{2}+1)^{2}-b_{1}^{2}]}.
\]

As expected, $\xi_{0}\in$ $\mathbb{R}$. A\ similar calculation yields%
\[
\xi_{1}=\frac{A(b_{2}+1)-r_{0}^{2}b_{1}}{r_{0}[(b_{2}+1)^{2}-b_{1}^{2}]}.
\]

If $0<b_{2}<1$ then $|\rho|=\mu=\sqrt{b_{2}}<1$. In this case, every solution
of (\ref{gla3}) converges to a 2-cycle $\xi_{0}$ and $\xi_{1}$. These limit
cycles depend on $r_{0}$ and thus, on the initial values $x_{-2},x_{-1,}x_{0}$
in the sense that all initial points on the plane $u+b_{1}v+b_{2}w=r_{0}$
converge to the same limit cycle. However, if $b_{2}\geq1$ then other types of
solutions, including unbounded solutions are possible for (\ref{gla3}) that
are driven by the homogeneous part of (\ref{gla3cf}). To observe the 2-cycles
numerically it is necessary to use the initial values%
\[
x_{-2}=\xi_{0},\quad x_{-1}=\xi_{1},\quad x_{0}=r_{0}-b_{1}\xi_{1}-b_{2}%
\xi_{0}=\xi_{0}.
\]

Going in a different direction, if the function $g$ in (\ref{gdef}) has a
3-cycle then as is well-known, it has cycles of all possible lengths. In this
case, if $0<b_{2}<1$ then (\ref{gla3}) also has cycles of all possible
lengths. A set of parameter values that imply this situation is $A=1$,
$C=1-a_{0}-b_{1}$ and $B=-\sqrt{3}$; see \cite{dkmos}. In this case,
(\ref{gla3f}) is%
\[
r_{n+1}=\frac{1}{r_{n}}-\sqrt{3}+r_{n}%
\]
and its 3-cycle is found to be%
\[
\sigma_{0}=\frac{2}{\sqrt{3}}\left(  1+\cos\frac{\pi}{9}\right)  ,\quad
\sigma_{1}=g(\sigma_{0}),\quad\sigma_{2}=g(\sigma_{1}).
\]

\end{document}